% Frattini.tex

\documentclass{article}%{amsart}

\usepackage{amsmath,amssymb,amsfonts}
\usepackage{amsthm} %% Greg does not have amsthm
\usepackage[breaklinks=true]{hyperref}
\usepackage{xspace,enumerate}
\usepackage{booktabs}
\usepackage{color,epsfig}
\usepackage{burdges}
\usepackage{calc}

\newcommand\Uir{U_{0,r}}

\begin{document}

\title{On Frattini arguments in $L$-groups \\ of finite Morley rank}
\author{Jeffrey Burdges%
\thanks{Supported by NSF grant DMS-0100794, and
Deutsche Forschungsgemeinschaft grant Te 242/3-1.}}

\maketitle

\begin{proposition}\label{p:invCarter}
Let $\hat{G}$ be a group of finite Morley rank,
 let $G$ itself be a definable connected normal subgroup group of $\hat{G}$,
 and let $S$ be a Sylow 2-subgroup of $\hat{G}$.
Then $G$ has an $S$-invariant Carter subgroup.
\end{proposition}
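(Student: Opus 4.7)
The approach is a Frattini argument, combined with the Sylow theory of the quotient $\hat{G}/G$, and ultimately with a reduction to the case $\hat{G} = G$.

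First, I would invoke Frécon's theorem to obtain a Carter subgroup $Q$ of $G$, together with the $G$-conjugacy of Carter subgroups of $G$. Because $G \trianglelefteq \hat{G}$, every $\hat{G}$-conjugate $Q^x$ is again a Carter subgroup of $G$ and so is $G$-conjugate to $Q$; setting $H = N_{\hat{G}}(Q)$, this yields the Frattini identity $\hat{G} = G \cdot H$ together with the induced isomorphism $H / N_G(Q) \simeq \hat{G}/G$.

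An $S$-invariant Carter subgroup of $G$ is the same data as a $\hat{G}$-conjugate of $S$ contained in $H$, so by conjugacy of Sylow $2$-subgroups in $\hat{G}$ it suffices to exhibit any single Sylow $2$-subgroup of $\hat{G}$ sitting inside $H$. To produce one, I would lift the Sylow $2$-subgroup $SG/G$ of $\hat{G}/G$ along the surjection $H \twoheadrightarrow \hat{G}/G$ to a Sylow $2$-subgroup $T$ of $H$, embed $T$ in a Sylow $2$-subgroup $S_1$ of $\hat{G}$, and use $TG = S_1 G$ to factor $S_1 = T \cdot (S_1 \cap G)$. The equality $T = S_1$ is then equivalent to $S_1 \cap G \leq N_G(Q)$, reducing the problem to the case $\hat{G} = G$: that some Sylow $2$-subgroup of $G$ normalizes some Carter subgroup of $G$.

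The main obstacle is precisely this final reduction. I would handle it by passing to the maximal $2$-torus $T_0$ of $(S_1 \cap G)^\circ$: it is characteristic in $S_1 \cap G$, hence normalized by $S_1 \cap G$, and it is a decent torus of $G$. Standard results on centralizers of decent tori in the $L$-setting provide a Carter subgroup $Q'$ of $G$ inside $C_G(T_0)^\circ$ that is invariant under $N_G(T_0)$, and hence under $S_1 \cap G$. Feeding this $Q'$ back into the Frattini setup yields $S_1 \cap G \leq N_G(Q')$, so the corresponding $T$ is indeed Sylow in $\hat{G}$; conjugating it to $S$ finally produces the desired $S$-invariant Carter subgroup of $G$.
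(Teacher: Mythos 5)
There is a genuine gap, and it is at the very first step. Your argument is built on ``the $G$-conjugacy of Carter subgroups of $G$,'' attributed to Fr\'econ. That conjugacy theorem is available only for \emph{connected solvable} groups of finite Morley rank; for a general (nonsolvable) $G$ the Fr\'econ--Jaligot theorem gives \emph{existence} of Carter subgroups but their conjugacy is not known, and the entire point of this proposition -- and of the paper's elaborate induction -- is to get an $S$-invariant Carter subgroup \emph{without} it. Once conjugacy is withdrawn, your Frattini identity $\hat{G} = G \cdot N_{\hat{G}}(Q)$ has no justification, the isomorphism $N_{\hat{G}}(Q)/N_G(Q) \simeq \hat{G}/G$ disappears, and the whole reduction to ``some Sylow $2$-subgroup of $G$ normalizes some Carter subgroup of $G$'' collapses. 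The paper instead takes a minimal counterexample and substitutes conjugacy facts that \emph{are} available -- conjugacy of maximal decent tori, conjugacy of Sylow $2$-subgroups, and (inductively) $S$-invariant Carter subgroups of proper $S$-invariant connected subgroups -- for the missing conjugacy of Carter subgroups.

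Even granting your reduction, the final step does not cover the hard case. You pass to the maximal $2$-torus $T_0$ of $(S_1 \cap G)^{\circ}$; but when $G$ has no divisible torsion (e.g.\ when the Sylow $2$-subgroup is of bounded exponent, or trivial), $T_0 = 1$, $C_G(T_0)^{\circ} = G$, and the decent-torus machinery yields nothing. This is precisely the case occupying most of the paper's proof, handled there by choosing a maximal $S$-invariant nilpotent $U_{0,r}$-subgroup (or $p$-unipotent subgroup) $R$ of least reduced rank and splitting into the cases $N_G^{\circ}(R) < G$ and $R \trianglelefteq G$. Moreover, the claim that $C_G(T_0)^{\circ}$ contains a Carter subgroup invariant under all of $N_G(T_0)$ is much stronger than any standard result; the paper only obtains, by induction, an $S$-invariant Carter subgroup of the \emph{proper} subgroup $N_G^{\circ}(T)$ for a maximal decent torus $T$, using that $T$ is central there and that $G$ may be assumed centerless.
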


% We first observe that this would follow from the genericity conjecture.
%
% \begin{lemma}\label{generic}
% In the notation of Proposition \ref{invCarter},  if there is a
%  Carter subgroup $Q$ of $G$ with $\bigcup Q^G$ is generic in $G$,
% then some conjugate of $Q$ is $S$-invariant.
% \end{lemma}
%
% \begin{proof}
% By Eric's argument,
%  all Carter subgroups whose conjugacy classes are generic are conjugate in $G$.
% By a Frattini argument,
%  any Carter subgroup whose conjugates are generic
%  has a $S$-invariance conjugate.
% \end{proof}

We use the following facts,
 as well as the \Frecon-Jaligot construction
 and the conjugacy of descent tori.

\begin{fact}[{\cite[\qCorollary 3.5]{Bu05a}}]\label{nildecomp}
Let $H$ be a nilpotent group of finite Morley rank.
Then $H = D * B$ is a central product of definable characteristic
 subgroups $D,B \leq H$ where $D$ is divisible and
 $B$ has bounded exponent (which is connected iff $H$ is connected).
Let $T$ be the torsion part of $D$.
Then we have decompositions of $D$ and $B$ as follows.
\begin{eqnarray*}
D &=& d(T) * U_{0,1}(H) * U_{0,2}(H) * \cdots \\
B &=& U_2(H) \oplus U_3(H) \oplus U_5(H) \oplus \cdots
\end{eqnarray*}
\end{fact}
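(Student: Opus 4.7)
The plan is a Frattini-type argument. Fix any Carter subgroup $Q$ of $G$. The $G$-conjugacy of Carter subgroups provided by the Fr\'econ--Jaligot construction, combined with the normality $G \trianglelefteq \hat G$, yields the Frattini identity $\hat G = G \cdot N_{\hat G}(Q)$. The proposition then reduces to showing that $N_{\hat G}(Q)$ contains a Sylow 2-subgroup of $\hat G$: for then some $\hat G$-conjugate of this Sylow 2 is $S$ itself, and the corresponding $G$-conjugate of $Q$ is an $S$-invariant Carter subgroup of $G$.

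To access the 2-structure of $Q$, I apply Fact~\ref{nildecomp}, writing $Q = D * B$ with $D$ divisible and $B$ of bounded exponent. The maximal 2-torus $T_2$ of $Q$ lies in $D$ and the connected unipotent 2-part $U_2(Q)$ lies in $B$; both are characteristic in $Q$ and hence invariant under $N_{\hat G}(Q)$. By the Fr\'econ--Jaligot construction, $T_2$ is a maximal descent 2-torus of $G$ and $Q$ arises as a Carter subgroup of $C := C_G(T_2)^\circ$. The key intermediate step is to show that $N_G(Q)$ contains a Sylow 2-subgroup of $G$: using conjugacy of descent tori in $G$, an arbitrary Sylow 2-subgroup of $G$ may be $G$-conjugated so that its maximal 2-torus equals $T_2$; it then normalizes $T_2$ and acts on $C$, whence a further application of the Fr\'econ--Jaligot construction inside $C$---with Fact~\ref{nildecomp} used to match the divisible and bounded-exponent pieces---places it inside $N_G(Q)$.

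Combining these, let $S^\ast$ be a Sylow 2-subgroup of $N_{\hat G}(Q)$. By the Frattini identity, $S^\ast$ surjects onto a Sylow 2-subgroup of $\hat G/G$; by the previous step, $S^\ast \cap G$ contains a Sylow 2-subgroup of $G$. A 2-rank comparison across $1 \to G \to \hat G \to \hat G/G \to 1$ then forces $S^\ast$ to be a Sylow 2-subgroup of $\hat G$. Conjugating $S^\ast$ to $S$ via an element $g \in \hat G$ produces the required $S$-invariant Carter subgroup $Q^{g^{-1}}$ of $G$. I expect the main obstacle to be the intermediate claim that $N_G(Q)$ contains a Sylow 2-subgroup of $G$: this is essentially the proposition for $\hat G = G$, and although morally a consequence of the Fr\'econ--Jaligot construction plus Fact~\ref{nildecomp}, matching the unipotent 2-part $U_2(Q)$ with the corresponding piece of the ambient Sylow 2-subgroup will require the careful component tracking that Fact~\ref{nildecomp} is designed to support.
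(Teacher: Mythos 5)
There is a fundamental mismatch here: the statement you were asked to prove is Fact~\ref{nildecomp}, a structure theorem for nilpotent groups of finite Morley rank --- namely that such a group $H$ decomposes as a central product $D * B$ of definable characteristic subgroups with $D$ divisible and $B$ of bounded exponent, together with the finer decompositions $D = d(T) * U_{0,1}(H) * U_{0,2}(H) * \cdots$ and $B = U_2(H) \oplus U_3(H) \oplus \cdots$. Your proposal does not address this statement at all; instead it sketches a Frattini-type argument for Proposition~\ref{p:invCarter} (the existence of an $S$-invariant Carter subgroup of $G$). Indeed, your argument explicitly \emph{invokes} Fact~\ref{nildecomp} twice as a tool, so it cannot serve as a proof of it --- that would be circular even if the target were correct.

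What a proof of the actual statement would require is entirely different machinery: the classical result (due to Nesin) that a nilpotent group of finite Morley rank is a central product of a definable divisible subgroup and a definable subgroup of bounded exponent; the decomposition of the divisible part via the definable hull $d(T)$ of its torsion and the $U_{0,r}$-components graded by reduced rank; and the primary decomposition of the bounded-exponent part into its $p$-unipotent Sylow subgroups $U_p(H)$. None of the ingredients you use --- conjugacy of Carter subgroups, descent tori, Sylow 2-subgroups of $\hat G$, the Fr\'econ--Jaligot construction --- bears on this. Note also that in the paper this statement is imported as a citation from \cite[Corollary 3.5]{Bu05a} rather than proved; if you were attempting the main proposition instead, you should say so, but even then your ``intermediate claim'' that $N_G(Q)$ contains a Sylow 2-subgroup of $G$ is, as you yourself observe, essentially the proposition being proved, so the argument as written is not self-supporting.
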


\begin{fact}[{\cite[\qLemma 4.4]{Bu05a}}]\label{Uz_on_Up}
Let $H$ be a connected solvable group of finite Morley rank.
Suppose that $S$ is a nilpotent $\Uir$-subgroup of $H$, and
 that $H = U_p(H) S$ for some $p$ prime.
Then $H$ is nilpotent, and $[U_p(H),S]=1$.
\end{fact}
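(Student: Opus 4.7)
The plan is to show $[U_p(H), S] = 1$; the nilpotency of $H = U_p(H) S$ then follows immediately, since it exhibits $H$ as a product of two commuting nilpotent subgroups. Throughout, set $U := U_p(H)$, which is characteristic in $H$ and therefore $S$-invariant.

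I would proceed by induction on the nilpotency class of $U$. When $U$ is non-abelian, set $Z := Z(U) \neq 1$; this subgroup is characteristic, hence $S$-invariant. Because $S$ is torsion-free (being a $\Uir$-group) while $Z \leq U$ is $p$-torsion, we have $S \cap Z = 1$, so $\bar{S} := SZ/Z \cong S$ remains a nilpotent $\Uir$-subgroup of $\bar{H} := H/Z$. Granting the standard compatibility $U_p(\bar{H}) = U/Z$ in the connected solvable setting, the induction hypothesis applied to $\bar{H} = U_p(\bar{H})\cdot \bar{S}$ yields $[U, S] \leq Z$, reducing matters to the case where $U$ is abelian.

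For the abelian case, a parallel induction on the derived length of $S$ reduces to $S$ abelian: once $[U, [S,S]] = 1$ is known, the action of $S$ on $U$ factors through the abelian $\Uir$-quotient $S/[S, S]$ and the whole argument can be rerun on this smaller data. To establish $[U, [S,S]] = 1$, I would apply Fact~\ref{nildecomp} to the nilpotent derived subgroup $K' := [US, US]$: in its central-product decomposition $K' = D * B$, the $p$-unipotent subgroup $[U, S]$ lies in $B$ while the $\Uir$-subgroup $[S, S]$ lies in $D$, so $[S, S]$ centralizes $[U, S]$; one then propagates this to $[U, [S,S]]$ by a short commutator-identity computation inside $U$.

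The main obstacle, and the heart of the proof, is the final reduced case: $U$ abelian $p$-unipotent, $S$ abelian $\Uir$, with $S$ acting on $U$ inside a connected solvable FMR group, to conclude $[U, S] = 1$. My plan is to examine the descending $S$-chain $U_0 := U$, $U_{k+1} := [U_k, S]$; by the descending chain condition on definable subgroups it stabilizes at some $U_\infty$ with $[U_\infty, S] = U_\infty$, and it suffices to show $U_\infty = 1$. Working inside $L := U_\infty \cdot S$ and applying Fact~\ref{nildecomp} to the Fitting subgroup $F := F(L)$, the quotient $L/F$ is a $\Uir$-quotient of $S$ which, by Fitting self-centralization $C_L(F) \leq F$, embeds into the definable automorphism group of $F$ preserving the $p$-unipotent central factor $B \supseteq U_\infty$. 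The crucial structural input—that a nontrivial divisible torsion-free $\Uir$-group admits no faithful definable action on a $p$-unipotent FMR group—forces $L = F$, so $L$ itself is nilpotent, and a final application of Fact~\ref{nildecomp} to $L$ places $U_\infty$ and $S$ in complementary central factors, yielding $[U_\infty, S] = 1$ and contradicting $U_\infty = [U_\infty, S]$ unless $U_\infty = 1$.
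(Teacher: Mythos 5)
First, note that the paper offers no proof of this statement at all: it is imported as a Fact from \cite[Lemma 4.4]{Bu05a}, so there is no in-paper argument to compare yours against; I can only assess your proposal on its own terms.

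Your reductions (peeling off $Z(U)$ to make $U$ abelian, using the nilpotence of $H'$ together with Fact~\ref{nildecomp} to make $S$ abelian, and the divisible-versus-bounded-exponent commutator trick $[u,s^{p^k}]=[u,s]^{p^k}$) are all workable, though several of them are waved at rather than carried out (the compatibility $U_p(H/Z)=U/Z$, the passage from $[[U,S],[S,S]]=1$ to $[U,[S,S]]=1$, and the fact that quotients of $\Uir$-groups are again $\Uir$-groups all need the push-forward/pull-back machinery of the unipotence theory). The genuine gap is that your ``crucial structural input'' --- a nontrivial $\Uir$-group has no faithful definable action on a $p$-unipotent group --- \emph{is} the lemma. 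Once one has it, the entire apparatus of $U_\infty$, $F(L)$ and the central decomposition is unnecessary: $S/C_S(U)$ is a $\Uir$-group or trivial and acts faithfully on $U_p(H)$, so it is trivial and $[U_p(H),S]=1$ at once. So everything you actually prove is the routine part, and the nontrivial content is assumed. That input is true, but it is not elementary: the standard route passes to a $T$-minimal section of $U$, applies Zilber's field theorem to interpret a field $K$ of characteristic $p$ with the acting group embedding in $K^{\times}$, and then invokes Wagner's theorem that $K^{\times}$ has no nontrivial definable divisible torsion-free subgroup in positive characteristic. Without naming and justifying that step (or an equivalent), the proposal does not prove the statement. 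A secondary, fixable slip: in your final paragraph $L/F$ acts faithfully on $F=F(L)$, not on its bounded-exponent factor $B$; to apply your structural input to $B\supseteq U_\infty$ you must replace $F$ by $C_L(B)$ (or $C_L(U_\infty)$) in the quotient.
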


\begin{fact}[{\cite[\qLemma 4.4]{Bu05a}}]\label{nilpotencepre2}
Let $H = K T$ be a group of finite Morley rank with
 $K \normal H$ a nilpotent $U_{0,r}$-group and
  $T$ a nilpotent $U_{0,s}$-group for some $s \geq r$.
Then $H$ is nilpotent.
\end{fact}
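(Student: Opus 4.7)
The plan is to induct on the Morley rank of $H$ and to reduce the question to showing that the $U_{0,s}$-group $T$ centralizes a central section of $K$. First note that $H$ is connected, since it is the product of the connected subgroups $K$ and $T$, and it is solvable as a nilpotent-by-nilpotent extension, so the structural theory of connected solvable groups of finite Morley rank and the $U_{0,r}$-machinery of Fact~\ref{nildecomp} are available throughout.

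Take a counterexample $H$ of minimal Morley rank. Choose a minimal nontrivial definable connected $T$-invariant subgroup $V$ of $Z(K)^{\circ}$; such a $V$ exists because $K$ is nilpotent (so $Z(K)^{\circ}$ is nontrivial) and $T$ normalizes $Z(K)^{\circ}$. The decomposition of Fact~\ref{nildecomp} is characteristic in $K$ and therefore $T$-invariant, and since $K$ is a pure $U_{0,r}$-group, so is $V$, making $V$ an abelian $U_{0,r}$-group. The quotient $H/V = (K/V)(TV/V)$ inherits the hypotheses, with $K/V$ a normal nilpotent $U_{0,r}$-group and $TV/V$ a nilpotent $U_{0,s}$-group, so by induction $H/V$ is nilpotent. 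Hence it suffices to show $[V,T]=1$: combined with $V \leq Z(K)$ this yields $V \leq Z(H)$, and a central extension of a nilpotent group is nilpotent.

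The whole proof therefore reduces to the module-theoretic claim that a nilpotent $U_{0,s}$-group $T$, with $s \geq r$, acting definably inside a connected solvable finite-Morley-rank group on an abelian $U_{0,r}$-group $V$, must centralize $V$. Here I would appeal to the homogeneity/weight theory of $U_{0,r}$-groups in connected solvable groups: any element of $T$ acting on $V$ with a nontrivial ``semisimple'' component would, by a weight analysis, force a $U_{0,r'}$-summand with $r' \leq r$ in its definable hull, contradicting purity of $T$ as a $U_{0,s}$-group with $s \geq r$ in every case except $s=r$; in the borderline case $s=r$ one iterates the nilpotence of $T$ together with the minimality of $V$ to conclude that the action of $T$ on $V$ is unipotent, and then necessarily trivial on the minimal submodule $V$.

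The main obstacle is precisely this last homogeneity statement: it is the technical heart of the $U_{0,r}$-theory and is exactly where the inequality $s \geq r$ is used. Everything else --- passage to a central $T$-invariant section $V$, the reduction to an action-centralizer problem, and the inductive step on Morley rank --- is routine once that centralizer statement is available.
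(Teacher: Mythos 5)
The paper does not prove this statement at all: it is imported verbatim as a Fact from \cite[Lemma 4.4]{Bu05a}, so there is no in-paper argument to compare against and your proposal has to stand on its own. Your reduction --- induct on rank, pass to a minimal nontrivial definable connected $T$-invariant subgroup $V$ of $Z(K)^\circ$ (which is normal in $H=KT$ since $K$ centralizes it), apply induction to $H/V$, and finish by a central extension argument once $[V,T]=1$ --- is structurally sound and is indeed the standard template for such results. But even this part quietly uses two nontrivial ingredients of the $U_{0,r}$-calculus that you assert rather than cite: that definable connected subgroups of a nilpotent $U_{0,r}$-group are again $U_{0,r}$-groups (homogeneity, which does not follow from Fact~\ref{nildecomp} alone), and that images of $U_{0,r}$-groups under definable quotients remain $U_{0,r}$-groups of the \emph{same} reduced rank $r$ (the push-forward lemma), without which the inductive hypothesis need not apply to $H/V$.

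The genuine gap is the step you yourself flag as the ``technical heart'': you never prove $[V,T]=1$, and the ``homogeneity/weight theory'' you appeal to is not an identifiable result --- there is no weight theory for $U_{0,r}$-groups that delivers this directly. Worse, the centralizer statement you reduce to in your final paragraph (``a nilpotent $U_{0,s}$-group with $s\geq r$ acting on an abelian $U_{0,r}$-group must centralize it'') is false without the minimality of $V$: in the unitriangular group $U_3(F)$ over a field $F$ of characteristic $0$, a $U_{0,1}$-subgroup acts nontrivially on an abelian normal $U_{0,1}$-subgroup. So the claim cannot be outsourced in that generality; minimality of $V$ is essential. The actual mechanism in this situation is Zilber's field theorem: if $T$ acts nontrivially on the minimal module $V$, one obtains an interpretable field $F$ of characteristic $0$ with $V\cong F_+$ and a nontrivial definable image of $T$ inside $F^\times$; the hypothesis $s\geq r$ is then contradicted by the fact that the maximal reduced rank of $U_{0,\ast}$-subgroups of $F^\times$ is strictly smaller than $\operatorname{rk}(F_+)=r$. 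Until that (or an equivalent argument, such as the Fitting-subgroup theorem $U_{0,\rho}(H)\leq F(H)$ for $\rho$ maximal) is supplied, the proposal establishes only the easy reductions and leaves the statement unproved exactly where the hypothesis $s\geq r$ must do its work.
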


\smallskip

\begin{proof}[Proof of Proposition \ref{p:invCarter}]
We take $G$ to be a minimal counterexample, i.e.\ proper
 connected $S$-invariant subgroups of $G$ have $S$-invariant Carter subgroups.
Then $G$ is nonsolvable by a Frattini argument.
We observe that Carter subgroups of $G$ correspond to
 Carter subgroups of $G/Z^\o(G)$.  % , by ???.
So $G$ is centerless too.
We may also assume that $C_S(G) = 1$.

If $G$ contains divisible torsion,  we take $T$ to be
 a maximal descent torus of $G$ (exists by \cite{Ch05}).
By \cite{Ch05}, all maximal descent tori are conjugate,
 so $G = N_G(T) G$ by a Frattini argument.
Hence a conjugate or $T$ is $S$-invariant.
There is a Carter subgroup $Q$ containing $T$,
 by a \Frecon-Jaligot construction.
By Fact \cite[\qTheorem 6.16]{BN}, $T$ is central in $M := N^\o_G(T)$.
Now we have $M < G$, since $G$ is centerless.
Since $M$ is $S$-invariant,
 there is an $S$-invariant Carter subgroup $Q$ of $M$.
Since $T$ is a characteristic subgroup of $Q$,
 $N^\o_G(Q) \leq N^\o_G(T) \leq M$.
So $Q$ is a Carter subgroup of $G$.
% by Fact \ref{Carter_cleanup}.
Hence we may assume that $G$ has no divisible torsion.

We find a maximal $S$-invariant group $R$
 with ``minimal degree of unipotence'' $r$.
Let $r$ be the minimal reduced rank such that
 there is an $S$-invariant nilpotent $\Uir$-subgroup of $G$,
or take $r = \infty$ if there are no $S$-invariant nilpotent $\Uir$-subgroups.
Let $i$ be a central involution of $S$.
Since $G$ is nonabelian,
 $C^\o_G(i)$ is infinite by \cite[Ex. 13 \& 15 on p.~79]{BN}.
and clearly $S$-invariant.
Since $C^\o_G(i) < G$,
 $C^\o_G(i)$ has a definable $S$-invariant nilpotent subgroup,
 by induction.
So, if $r = \infty$, there is an $S$-invariant nilpotent group
 of bounded exponent, by Fact \ref{nildecomp}.
Let $R$ be a maximal $S$-invariant nilpotent $\Uir$-subgroup of $G$,
 or else a maximal $S$-invariant $p$-unipotent subgroup of $G$ if $r = \infty$.
Now consider the $S$-invariant group $M := N^\o_G(R)$.

First suppose that $M < G$.
Then there is an $S$-invariant Carter subgroup $Q$ of $M$,
 by induction.
Now $R Q$ is solvable.
We show that $R$ is characteristic in $Q$.
If $r = \infty$,
 $R Q$ is nilpotent by Fact \ref{Uz_on_Up}, % and Fact \ref{???},
 and $R Q$ has bounded exponent.
So $R = Q$ by maximality.
If $r < \infty$,
 $r$ is the minimal reduced rank found in $Q$,
  since $U_{0,t}(Q)$ is $S$-invariant for all $t$.
So $R Q$ is nilpotent by Fact \ref{nilpotencepre2}.
 $R \leq Q$, and hence $\Uir(Q) = R$.
In either case, $N^\o_G(Q) \leq M$.
So $Q$ is a Carter subgroup of $G$.
% by Fact \ref{Carter_cleanup}.

Next suppose that $M = G$, i.e.\ $R \normal G$.
Then there is an $S$-invariant Carter subgroup $\bar{Q}$ of $\bar{G} = G/R$.
Now the pullback $Q$ of $\bar{Q}$ to $G$ is solvable,
 and $R \leq Q$.
Since $G$ is nonsolvable, $\bar{Q} < \bar{G}$, and $Q < G$.
Since $Q$ is $S$-invariant,
 there is an $S$-invariant Carter subgroup $L$ of $Q$,
 by induction.
We show that $Q = R L$ is nilpotent.
If $r = \infty$,
 $R L$ is nilpotent by Fact \ref{Uz_on_Up}. % and Fact \ref{???}.
If $r < \infty$,
 $r$ is the minimal reduced rank found in $L$ (or $Q$),
 since $U_{0,t}(L)$ is $S$-invariant for all $t$.
So $R L$ is nilpotent by Fact \ref{nilpotencepre2},
 and $R \leq L$.
By maximality of $R$, $\Uir(L) = R$.
By Fact \ref{nildecomp}.
 $U_{0,t}(L)$ centralizes $R$ for $t > r$.
In either case, $Q = R L$ is nilpotent.
We observe that $N_G(Q) / R \leq N_{\bar{G}}(\bar{Q})$
 has the same rank as $\bar{Q}$.
So $\rk(N_G(Q)) = \rk(Q)$, and $N^\o_G(Q) = Q$.
Thus $Q$ is an $S$-invariant Carter subgroup of $G$.
% by Fact \ref{Carter_cleanup}.
\end{proof}

\small
\bibliographystyle{alpha} % {elsart-num}
\bibliography{burdges,fMr}

\end{document}